\documentclass[12pt]{amsart}

\usepackage{geometry}
\usepackage{indentfirst}
\usepackage{setspace}
\usepackage{enumitem}
\usepackage{fancyhdr}

\usepackage{amssymb}
\usepackage{mathtools}
\usepackage{stmaryrd}
\usepackage{amscd}

\usepackage{graphicx}
\usepackage{pifont}
\usepackage{mathrsfs}
\usepackage{titletoc}
\usepackage{stix2}
\usepackage[all]{xy}
\usepackage{listings}

\usepackage[table]{xcolor}
\usepackage{tikz}
\usepackage{tikz-cd}
\usepackage{rotating}
\usepackage{ytableau}
\usepackage{longtable}
\usepackage[colorlinks,linkcolor=red,anchorcolor=green,citecolor=blue]{hyperref}
\hypersetup{linktocpage = true}

\makeatletter
\def\@evenhead{\null\hfill\thepage\hfill\null} 
\def\@oddhead{\null\hfill\thepage\hfill\null}  
\def\@evenfoot{\null}
\def\@oddfoot{\null}
\makeatother

\allowdisplaybreaks
\newcommand{\abs}[1]{\left|#1\right|}
\newcommand{\ton}[1]{\left(#1\right)} 
\newcommand{\ps}[2]{\langle{#1},{#2}\rangle}

 \newcommand{\Ric}{\mathrm{Ric}}

\theoremstyle{plain}
\newtheorem{thm}{Theorem}[section]
\newtheorem{lemma}{Lemma}[section]
\newtheorem{prop}{Proposition}[section]
\newtheorem{cor}{Corollary}[section]

\theoremstyle{definition}

\newtheorem{remark}{Remark}[section]

\setlist[itemize]{leftmargin=*}
\setlist[enumerate]{leftmargin=*}

\numberwithin{equation}{section}

	\title[
]{Sharp Estimates for the Robin Laplacian under a Perimeter Constraint in hyperbolic space
}

\author{Daguang Chen, Shan Li }
\thanks{The authors were supported by NSFC grant No. 11831005 and NSFC-FWO 11961131001.}

\subjclass[2020]{{35P15}, {58J50}, {52A55}
}
\keywords{Robin Laplacian; isoperimetric inequalities; horospherically convex; boundary parameter}

\begin{document}
	\maketitle
	
	\begin{abstract}	
		In this paper, we establish a lower bound, in terms of the isoperimetric deficit, for the first eigenvalue of the Robin Laplacian with negative boundary parameter on horospherically convex bounded domains in the hyperbolic space. This implies that the geodesic ball maximizes this eigenvalue among all such domains, thereby providing a partial resolution to an open problem posed  by Celentano, Krej\v{c}i\v{r}\'{i}k and Lotoreichik in \cite{CKL26}.
		Furthermore, we derive upper bounds for the first eigenvalue of the Robin Laplacian with positive boundary parameter on horospherically convex bounded domains in the hyperbolic space. 
	
	\end{abstract}
	
	\section{Introduction}
	Let $\Omega$ be a bounded domain with smooth boundary in an $n$-dimensional complete Riemannian manifold $\ton{M,g}$. The following is called the eigenvalue problem of the Robin Laplacian
	\begin{eqnarray}\label{eq:rob}
		\begin{cases}
			-\Delta u = \lambda\left(\Omega,\beta\right)~u, & \text{in}~ \Omega, \\
			\dfrac{\partial u}{\partial N} + \beta u = 0, & \text{on } \partial \Omega,
		\end{cases}
	\end{eqnarray}
	where $N$ is the unit outward normal to $\partial\Omega$ and $\beta\in\mathbb{R}$. For the Robin eigenvalue problem \eqref{eq:rob}, the associated Rayleigh quotient is
	\begin{align*}
		\mathcal{R}(u)= \dfrac{\displaystyle\int_{\Omega} |\nabla u|^2\, d\mu + \beta \int_{\partial \Omega} u^2 \,d\sigma}{\displaystyle\int_{\Omega} u^2\, d\mu},\quad u\in H^1\ton{\Omega}\setminus \{0\}.
	\end{align*}
	The first eigenvalue of the Robin Laplacian is characterized by the variational principle
	\begin{eqnarray}\label{eq:ray}
		\lambda_1\ton{\Omega,\beta} = \min_{\substack{u \in H^1(\Omega) \\ u \neq 0}}\mathcal{R}(u).
	\end{eqnarray}
	
	When $\beta<0$, Bareket \cite{Bareket77} conjectured in 1977 that the ball maximizes $\lambda_1\ton{\Omega,\beta}$ among all Lipschitz sets of a given area in the Euclidean plane. In 2015, Ferone, Nitsch and Trombetti \cite{FNT15} established that the ball is a local maximizer for the first eigenvalue of the Robin Laplacian within the class of bounded Lipschitz domains in $\mathbb{R}^n$. Freitas and Krej\v{c}i\v{r}\'{i}k \cite{FK15} showed that the Bareket conjecture holds for $\beta$ sufficiently close to $0$ in the Euclidean plane. For a compact Riemannian manifold, Savo \cite{Savo20} established an upper bound for the first eigenvalue of the Robin Laplacian in terms of the dimension, inradius, and lower bounds on Ricci and boundary mean curvature. Later, Li and Wang \cite{LW21} proved an upper bound for the first eigenvalue of the $p$-Laplacian with Robin boundary conditions in terms of these geometric parameters, via comparison with an associated one-dimensional eigenvalue problem. Li, Wang and Wu \cite{LWW23} established that geodesic balls in nonpositively curved space forms maximize the second eigenvalue of the Robin Laplacian among bounded domains with the same volume.
	
	 For bounded planar domains $\Omega$ of class $C^2$ in $\mathbb{R}^2$, Antunes, Freitas and Krej\v{c}i\v{r}\'{i}k \cite{AFK17}
	proved that
	\begin{align}\label{eq:per}
		\lambda_1\ton{\Omega,\beta}\leq \lambda_1\ton{\Omega^\star,\beta},
	\end{align}
	where $\Omega^\star$ is a ball with the same perimeter as $\Omega$.
	Bucur et al. \cite{BFNT19} showed that \eqref{eq:per} holds for convex domains in $\mathbb{R}^n$.
	Amato, Gentile and Masiello \cite{AGM24} established a lower bound, in terms of the isoperimetric deficit, for the first eigenvalue of the $p$-Laplacian with Robin boundary conditions on convex sets with prescribed perimeter in $\mathbb{R}^n$. For more on eigenvalue estimates, see \cite{FL21,KL22,Vikulova22}, among others.
	
	In \cite{ACCNT25}, Acampora et al. established a lower bound for the first eigenvalue of the Robin Laplacian with negative parameter on strongly convex sets in the sphere $\mathbb{S}^n$, under a perimeter constraint. The authors \cite{ACCNT25} also constructed convex sets for which the inner parallel sets are not convex in the hyperbolic space $\mathbb{H}^n$.
	 In the introduction of \cite{CKL26}, Celentano, Krej\v{c}i\v{r}\'{i}k and Lotoreichik  stated "$\cdots$, to the best of our knowledge, it remains an open problem whether among all bounded smooth simply-connected domains in the hyperbolic plane with fixed perimeter the lowest eigenvalue of the Robin Laplacian with negative boundary parameter is maximised by the geodesic disk." We observe that the inner parallel sets of horospherically convex bounded domains are also horospherically convex in $\mathbb{H}^n$. Based on this observation, in the first part, for $\beta<0$, we establish a lower bound for $\lambda_1\ton{\Omega,\beta}$ on horospherically convex bounded domains in hyperbolic space $\mathbb{H}^n$ by means of the isoperimetric deficit. This bound implies that the geodesic ball maximizes this eigenvalue, thereby providing a partially affirmative answer to the open problem posed in \cite{CKL26}.

	\begin{thm}\label{thm1}
		Let $\Omega$ be a horospherically convex bounded domain with smooth boundary in hyperbolic space $\mathbb{H}^n$ 
		and $P(\Omega)$ be the perimeter of $\Omega$. Let $\Omega^\star$ be the geodesic ball with the same perimeter as $\Omega$, that is, $P\ton{\Omega}=P\ton{\Omega^\star}$. For $\beta<0$, denote by $v$ the eigenfunction associated with $\lambda_1(\Omega^\star,\beta)$ for \eqref{eq:rob}. Then
		\begin{equation}\label{eq:low}
			\frac{\lambda_1(\Omega^\star,\beta) - \lambda_1(\Omega,\beta)}{|\lambda_1(\Omega,\beta)|} \geq \frac{v_m^2}{\| v \|_{L^2(\Omega^\star)}^2} \bigl(|\Omega^\star| - |\Omega|\bigr),
		\end{equation}
		where $v_m=\min\limits_{x\in\Omega^\star}v(x)$ and $\abs{\Omega}$ is the volume of $\Omega$.
	\end{thm}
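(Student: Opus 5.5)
Following the Euclidean strategy of Bucur et al.\ and Amato--Gentile--Masiello, I would use the web-function method: build a test function on $\Omega$ by transplanting the radial eigenfunction of $\Omega^\star$ along the distance to the boundary, then use the Rayleigh quotient \eqref{eq:ray}. First I collect the facts about $v$. Since $\beta<0$, $\lambda_1(\Omega^\star,\beta)<0$ and $v$ can be taken positive and radial, $v(x)=\psi(r(x))$ with $r$ the distance from the center. From the ODE $\psi''+(n-1)\coth r\,\psi'=-\lambda_1\psi>0$ and $\psi'(0)=0$, we get $\psi'>0$. Hence $v$ is increasing in $r$, $v_m=\psi(0)$, and $v$ attains its maximum on $\partial\Omega^\star$. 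Let $R$ be the radius of $\Omega^\star$. Writing $v=G(R-r)$, i.e. as a function of the distance $d^\star$ to $\partial\Omega^\star$, $G$ is decreasing.

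Next the geometry of $\Omega$. Let $d(x)=\operatorname{dist}(x,\partial\Omega)$ and $\Omega_t=\{d>t\}$. Set $P(t)=P(\Omega_t)$ and $\mu(t)=|\Omega_t|$, so $\mu'(t)=-P(t)$ a.e.\ by the coarea formula ($|\nabla d|=1$). By the observation in the introduction, each $\Omega_t$ is horospherically convex, so its principal curvatures are $\ge1$. I want the comparison
\[
P(t)\le P^\star(t):=P(\Omega^\star_t)=|\mathbb S^{n-1}|\sinh^{n-1}(R-t), \qquad 0\le t\le r_\Omega ,
\]
where $r_\Omega$ is the inradius. It holds with equality at $t=0$. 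For $t>0$ I would compare the first variation $-\frac{d}{dt}P(t)=\int_{\partial\Omega_t}H\ge\dots$ with the ball. For h-convex sets, $H\ge (n-1)$ pointwise is too weak. So I would instead use the Alexandrov--Fenchel-type (quermassintegral) inequalities valid for h-convex domains (Wang--Xia, Ge--Wang--Wu), which bound $\int_{\partial\Omega_t} H$ below by a function $f(P(t))$ of the perimeter, with equality on balls. This gives $P'\le -f(P)$, whereas $P^{\star\prime}=-f(P^\star)$, and ODE comparison yields $P(t)\le P^\star(t)$. In particular $r_\Omega\le R$. This is the step I expect to be the main obstacle, since the monotonicity has to be justified for the only Lipschitz-regular inner parallel sets of $\Omega$. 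I would handle it by approximation, or by working directly with the Steiner-type formula for h-convex bodies.

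Now define the test function $u(x)=G(d(x))$ on $\Omega$. Then $u=G(0)=v|_{\partial\Omega^\star}$ on $\partial\Omega$, and $|\nabla u|=|G'(d)|$. By coarea,
\[
\int_\Omega|\nabla u|^2=\int_0^{r_\Omega}G'(t)^2P(t)\,dt\le\int_0^{R}G'(t)^2P^\star(t)\,dt=\int_{\Omega^\star}|\nabla v|^2 ,
\]
and $\beta\int_{\partial\Omega}u^2=\beta G(0)^2P(\Omega)=\beta\int_{\partial\Omega^\star}v^2$. Hence the numerator of $\mathcal R(u)$ is at most $\lambda_1(\Omega^\star,\beta)\|v\|^2_{L^2(\Omega^\star)}$.

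For the denominator, $\int_\Omega u^2=\int_0^{r_\Omega}G(t)^2P(t)\,dt$. Put $h(t)=\int_0^tG^2P$ and let $h^\star$ be the analogue for the ball. Since $G$ is decreasing and $P\le P^\star$, I would rewrite the difference by integrating by parts against the volume functions $\mu$ and $\mu^\star$:
\[
\int_{\Omega^\star}v^2-\int_\Omega u^2=\int G(t)^2\,d\big(\mu(t)-\mu^\star(t)\big).
\]
Here $\mu(0)-\mu^\star(0)=|\Omega|-|\Omega^\star|\le0$, and $\mu-\mu^\star$ is increasing because $P\le P^\star$. Since $G^2$ is decreasing and bounded below by $G(R)^2=v_m^2$, this gives
\[
\int_\Omega u^2\ge \|v\|^2_{L^2(\Omega^\star)}-v_m^2\big(|\Omega^\star|-|\Omega|\big).
\]
This inequality needs care with the sign bookkeeping, but it is elementary. (The weaker bound $\int_\Omega u^2\ge\|v\|^2$ is false in general, so the deficit term is exactly what appears.)

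Finally I combine the pieces. Because $\lambda^\star:=\lambda_1(\Omega^\star,\beta)<0$, we get
\[
\lambda_1(\Omega,\beta)\le\frac{\lambda^\star\|v\|^2}{\int_\Omega u^2}.
\]
Rearranging, $\lambda^\star\|v\|^2\ge\lambda_1(\Omega,\beta)\int_\Omega u^2$. Substituting the lower bound for $\int_\Omega u^2$, using $\lambda_1(\Omega,\beta)<0$, and dividing by $|\lambda_1(\Omega,\beta)|\,\|v\|^2$ gives \eqref{eq:low}.
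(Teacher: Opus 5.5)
Your strategy is the same as the paper's. You transplant $v$ to $\Omega$ through the distance to the boundary, get $P(\Omega_t)\le P(\Omega_t^\star)$ from the Wang--Xia inequality with $k=1$ plus ODE comparison, and estimate the Rayleigh quotient with the coarea formula. Your numerator estimate is correct. For the regularity issue you flag, the paper uses the inclusion $(\Omega_t)^s\subset\Omega_{t-s}$ and the Steiner formula for the h-convex set $\Omega_t$ to get $-\frac{d}{dt}P(\Omega_t)\ge (n-1)V_{n-2}(\Omega_t)$ for a.e.\ $t$.

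The genuine error is in the denominator step: your inequality points the wrong way. In your notation, extending $P$ by $0$ on $(r_\Omega,R)$,
\[
\|v\|_{L^2(\Omega^\star)}^2-\int_\Omega u^2=\int_0^R G(t)^2\bigl(P^\star(t)-P(t)\bigr)\,dt\ \ge\ v_m^2\bigl(|\Omega^\star|-|\Omega|\bigr).
\]
This holds because $P^\star-P\ge 0$ has total integral $|\Omega^\star|-|\Omega|$ and $G^2\ge G(R)^2=v_m^2$. So the true statement is $\int_\Omega u^2\le\|v\|^2-v_m^2(|\Omega^\star|-|\Omega|)$. Since $G$ is strictly decreasing, this is strict whenever $|\Omega|<|\Omega^\star|$, so your claimed lower bound is false for every domain that is not a geodesic ball. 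A lower bound of this form holds only with $v_M^2=G(0)^2$ in place of $v_m^2$; that is the shape relevant to the case $\beta>0$.

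Even if your lower bound were true, the last step would not go through. You have $\lambda^\star\|v\|^2\ge\lambda_1(\Omega,\beta)\int_\Omega u^2$ and $\lambda_1(\Omega,\beta)<0$. A lower bound $\int_\Omega u^2\ge L$ then gives only $\lambda_1(\Omega,\beta)\int_\Omega u^2\le\lambda_1(\Omega,\beta)L$, which does not chain with the first inequality. Because the Rayleigh numerator is negative when $\beta<0$, you need an \emph{upper} bound $\int_\Omega u^2\le L$, where $L=\|v\|^2-v_m^2(|\Omega^\star|-|\Omega|)$. With it, $\lambda^\star\|v\|^2\ge\lambda_1(\Omega,\beta)L$, which rearranges to the theorem.

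This corrected version is exactly the paper's argument, and your integration by parts produces it once the signs are tracked. Your aside is right that $\int_\Omega u^2\ge\|v\|^2$ fails, but the reason is that the opposite inequality $\int_\Omega u^2\le\|v\|^2$ holds. That inequality alone already gives $\lambda_1(\Omega,\beta)\le\lambda_1(\Omega^\star,\beta)$; the $v_m^2$ term is the quantitative sharpening.
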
 
According to the isoperimetric inequality in the hyperbolic space \cite{Chavel84}, the following result holds.
	\begin{cor}\label{cor1}
		Under the same assumptions as in Theorem \ref{thm1}, we have
		\begin{equation}\label{eq:cor1}
			\lambda_1\ton{\Omega,\beta}\leq\lambda_1\ton{\Omega^\star,\beta}.
		\end{equation}
		Moreover, equality holds if and only if $\Omega$ is isometric to a geodesic ball in $\mathbb{H}^{n}$.
	\end{cor}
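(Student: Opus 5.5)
Corollary \ref{cor1} follows directly from Theorem \ref{thm1} together with the isoperimetric inequality in $\mathbb{H}^n$ \cite{Chavel84}. The plan is to check the sign of every factor on the right-hand side of \eqref{eq:low}, then handle the equality case.

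\textbf{The inequality.} Since $P(\Omega)=P(\Omega^\star)$ and $\Omega^\star$ is a geodesic ball, the isoperimetric inequality gives $|\Omega|\le|\Omega^\star|$. Equality holds if and only if $\Omega$ is a geodesic ball. The factor $v_m^2/\|v\|^2_{L^2(\Omega^\star)}$ is nonnegative, so the right-hand side of \eqref{eq:low} is $\ge 0$.

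For $\beta<0$, testing the Rayleigh quotient \eqref{eq:ray} with $u\equiv1$ gives
\[
\lambda_1(\Omega,\beta)\le \beta P(\Omega)/|\Omega|<0 .
\]
Hence $|\lambda_1(\Omega,\beta)|>0$, and the division in \eqref{eq:low} is legitimate. Multiplying \eqref{eq:low} by $|\lambda_1(\Omega,\beta)|$ yields $\lambda_1(\Omega^\star,\beta)-\lambda_1(\Omega,\beta)\ge0$, which is \eqref{eq:cor1}.

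\textbf{The equality case.} If $\Omega$ is a geodesic ball with the same perimeter as $\Omega^\star$, it is isometric to $\Omega^\star$. Eigenvalues are isometry invariant, so equality holds.

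Conversely, suppose equality holds in \eqref{eq:cor1}. Then \eqref{eq:low} forces
\[
v_m^2\,(|\Omega^\star|-|\Omega|)\le 0 .
\]
To conclude $|\Omega|=|\Omega^\star|$, I need $v_m\neq0$; this is the only nontrivial step. The first Robin eigenfunction can be chosen positive in the interior. By radial symmetry, $v$ is radial. Moreover $\partial_N v=-\beta v>0$ on the boundary, so $v$ is positive up to the boundary: if $v$ vanished at a boundary point, the Robin condition would force $\partial_N v=0$ there, contradicting the Hopf lemma. Since $\overline{\Omega^\star}$ is compact and $v$ is continuous, $v_m>0$.

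It follows that $|\Omega|=|\Omega^\star|$ with equal perimeters. The rigidity part of the isoperimetric inequality then shows that $\Omega$ is a geodesic ball, isometric to $\Omega^\star$.
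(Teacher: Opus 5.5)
Your argument is correct and follows the route the paper intends: the paper states the corollary as an immediate consequence of Theorem \ref{thm1} and the isoperimetric inequality in $\mathbb{H}^n$, and the equality case follows from isoperimetric rigidity once $v_m>0$. Your Hopf-lemma step is valid but unnecessary, since the paper already shows that $v$ is radially increasing for $\beta<0$; hence $v_m=\psi(0)$ is the value at the center of $\Omega^\star$, an interior point where $v>0$.
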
 
	
	\begin{remark}
		Bucur et al. \cite{BFNT19} established \eqref{eq:cor1} for convex domains in $\mathbb{R}^n$. Acampora et al. \cite{ACCNT25} proved that both \eqref{eq:low} and \eqref{eq:cor1} hold on strongly convex domains in $\mathbb{S}^n$.
	\end{remark}
	
	For the Robin eigenvalue problem \eqref{eq:rob} with $\beta>0$, Bossel \cite{Bossel86} and Daners \cite{Daners06} proved that, in Euclidean space,
	\begin{equation}\label{ineq: BD}
		\lambda_1(\Omega,\beta)\geq \lambda_1\left(\Omega^\sharp,\beta\right),
	\end{equation}
	where $\Omega^\sharp$ denotes an Euclidean ball with the same volume as $\Omega$.
	Bucur and Daners \cite{BD10}, Dai and Fu \cite{DF11} independently proved the inequality \eqref{ineq: BD} for the $p$-Laplacian in Euclidean space. The Bossel-Daners inequality was established for complete noncompact Riemannian manifolds with $\Ric\geq 0$ in \cite{CLW23}, $n$-dimensional compact Riemannian manifolds with $\Ric\geq n-1$ and $n$-dimensional hyperbolic space in \cite{CCL22}.
	Furthermore, for convex sets with prescribed perimeter in $\mathbb{R}^n$, Amato, Gentile and Masiello \cite{AGM24} obtained an upper bound for the first eigenvalue of the $p$-Laplacian with Robin boundary conditions in terms of the isoperimetric deficit. Analogous results can be found in \cite{DK08,CG22,DP24} and references therein.
	
	In the second part, we focus on the estimates for the first eigenvalue of the Robin Laplacian with positive boundary parameter in hyperbolic space $\mathbb{H}^n$. 
	For horospherically convex bounded domains with smooth boundary in hyperbolic space $\mathbb{H}^n$, we establish an upper bound for the first eigenvalue of the Robin Laplacian with positive boundary parameter.
	\begin{thm}\label{thm4}
		Let $\Omega$ be a horospherically convex bounded domain with smooth boundary in hyperbolic space $\mathbb{H}^n$ and $\Omega^\star$ be the geodesic ball with the same perimeter as $\Omega$, that is, $P\ton{\Omega}=P\ton{\Omega^\star}$. For $\beta>0$, denote by $v$ the eigenfunction associated with $\lambda_1(\Omega^\star,\beta)$ for \eqref{eq:rob}. Then
		\begin{equation}\label{thm-posi}
			\frac{\lambda_1(\Omega,\beta) - \lambda_1(\Omega^\star,\beta)}{\lambda_1(\Omega,\beta)} \leq \frac{v_M^2}{\| v \|_{L^2(\Omega^\star)}^2} \left(|\Omega^\star| - |\Omega|\right),
		\end{equation}
		where $v_M=\max\limits_{x\in\Omega^\star}v(x)$.
	\end{thm}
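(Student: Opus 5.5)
We use the method of interior parallels (web functions), transplanting the radial eigenfunction of $\Omega^\star$ onto $\Omega$ through the distance to the boundary. Let $R$ be the radius of $\Omega^\star$ and write $v(x)=\psi(R-r(x))$, where $r$ is the distance to the centre. For $\beta>0$ the first eigenfunction on the ball is positive and radially decreasing, so $\psi$ is nondecreasing in $t=R-r$, with $\psi(0)=v|_{\partial\Omega^\star}=v_m$ and maximum $v_M$ at the centre. On $\Omega$ let $d(x)=\operatorname{dist}(x,\partial\Omega)$ and set $u=\psi(d)$ for $d\le R$; if the inradius $\rho$ of $\Omega$ exceeds $R$, extend $u$ by the constant $\psi(R)=v_M$ (in fact $\rho\le R$ should follow from the comparison below). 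Since $|\nabla d|=1$ a.e. and $u=v_m$ on $\partial\Omega$, the Rayleigh quotient gives
$$\lambda_1(\Omega,\beta)\le \frac{\int_0^{\rho}\psi'(t)^2 P(\Omega_t)\,dt+\beta v_m^2 P(\Omega)}{\int_\Omega u^2\,d\mu},$$
where $\Omega_t=\{d>t\}$ are the inner parallel sets. The coarea formula applies because $|\nabla d|=1$.

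\textbf{Step 1 (perimeter comparison of parallel sets).} Using the observation in the introduction that the inner parallel sets of a horospherically convex domain are again horospherically convex, we prove $P(\Omega_t)\le P(\Omega^\star_t)=P(B_{R-t})$ for every $t$. The plan is to compute $\frac{d}{dt}P(\Omega_t)=-\int_{\partial\Omega_t}H\,d\sigma$ (in the a.e./distributional sense) and then use the hyperbolic Minkowski-type inequality $\int_{\partial\Omega_t}H\ge (n-1)\sqrt{P(\Omega_t)^{2/(n-1)}\cdots}$ valid for h-convex domains; this is the Ge--Wang--Wu / Andrews--Chen--Wei inequality $\int H\ge$ the value on the ball with the same area. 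That inequality yields an ODE inequality, and ODE comparison starting from $P(\Omega_0)=P(\Omega^\star)$ gives $P(\Omega_t)\le P(B_{R-t})$ and $\rho\le R$. The numerator is then bounded by the corresponding ball quantity, which equals $\lambda_1(\Omega^\star,\beta)\|v\|^2_{L^2(\Omega^\star)}$.

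\textbf{Step 2 (denominator).} Since $\psi$ is nondecreasing, $\psi\ge v_m$. Write $\mu(t)=|\Omega_t|$ and $\mu^\star(t)=|\Omega^\star_t|$, and compare
$$\int_\Omega u^2=\int_0^\rho \psi(t)^2P(\Omega_t)\,dt,\qquad \|v\|^2=\int_0^R\psi^2P(\Omega^\star_t)\,dt.$$
Their difference is $\int_0^R\psi(t)^2\,\bigl(P(\Omega^\star_t)-P(\Omega_t)\bigr)\,dt$, a nonnegative weight bounded by $v_M^2$, whose total mass is $|\Omega^\star|-|\Omega|$. Hence
$$\int_\Omega u^2\ge \|v\|^2-v_M^2(|\Omega^\star|-|\Omega|).$$
Combining the two steps,
$$\lambda_1(\Omega,\beta)\bigl(\|v\|^2-v_M^2(|\Omega^\star|-|\Omega|)\bigr)\le\lambda_1(\Omega^\star,\beta)\|v\|^2.$$
Dividing by $\lambda_1(\Omega,\beta)\|v\|^2$ gives exactly \eqref{thm-posi}.

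\textbf{Main obstacle.} The hard part is Step 1: justifying $P(\Omega_t)\le P(\Omega^\star_t)$. This requires both the preservation of h-convexity under inner parallels (away from the cut locus, and regularity issues where $\partial\Omega_t$ is only $C^{1,1}$) and the availability of the sharp quermassintegral inequality for h-convex sets in $\mathbb{H}^n$ at each $t$. If that inequality is unavailable in the needed form, I would first try the simpler bound $H\ge n-1$ for h-convex surfaces combined with Gauss--Bonnet-type reasoning in dimension two, and otherwise cite the Andrews--Chen--Wei result.
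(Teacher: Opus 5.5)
Your proposal is correct and is essentially the paper's proof. It uses the same transplanted test function $u=\psi(d)$ (the paper's $\phi(d(x,\partial\Omega))$) and the same comparison $P(\Omega_t)\le P(\Omega^\star_t)$, which also gives $R_\Omega\le R$. That comparison comes, as in the paper, from h-convexity of the inner parallel sets, the $k=1$ case of the Wang--Xia Alexandrov--Fenchel inequality, and an ODE comparison; only the one-sided bound $-\frac{d}{dt}P(\Omega_t)\ge\int_{\partial\Omega_t}H\,d\sigma$ is needed there, and equality can fail once $\partial\Omega_t$ develops corners. The remaining co-area estimates for the three terms of the Rayleigh quotient are also the paper's.

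The one difference is that you clear denominators instead of splitting into cases on the sign of $\|v\|_{L^2(\Omega^\star)}^2-v_M^2(|\Omega^\star|-|\Omega|)$. This is legitimate and slightly cleaner because $\lambda_1(\Omega,\beta)>0$ for $\beta>0$. You use this positivity implicitly, both when multiplying $\int_\Omega u^2\ge\|v\|^2-v_M^2(|\Omega^\star|-|\Omega|)$ by $\lambda_1(\Omega,\beta)$ and when dividing at the end, so you should state it.
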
 
		\begin{remark}
		For $\beta=\infty$, equation \eqref{eq:rob} can be seen as the problem equipped with the Dirichlet boundary condition; the inequality in Theorem \ref{thm4} provides an estimate for the Faber-Krahn deficit. In this case, Brandolini, Nitsch and Trombetti \cite{BNT10} derived a Faber-Krahn deficit in terms of the isoperimetric deficit in $\mathbb{R}^n$.
	\end{remark}

\begin{remark}
Let $\Omega \subset \mathbb{S}^n$ be an open set such that $\overline{\Omega}$ is strongly convex and $\Omega^\star$ be the strongly convex geodesic ball with the same perimeter as $\Omega$. For $\beta>0$, it holds that
\begin{equation}\label{eq-thm3}
	\frac{\lambda_1(\Omega,\beta) - \lambda_1(\Omega^\star,\beta)}{\lambda_1(\Omega,\beta)} \leq \frac{v_M^2}{\| v \|_{L^2(\Omega^\star)}^2} \left(|\Omega^\star| - |\Omega|\right).
\end{equation}
			Combining the isoperimetric inequality \cite{Chavel84}, the Bossel-Daners inequality \cite{CCL22,CLW23}, and the monotonicity of the first eigenvalue of the Robin Laplacian on concentric balls, we conclude that the left-hand side of each of \eqref{thm-posi} and \eqref{eq-thm3} is nonnegative.
	\end{remark}

	The paper is organized as follows. In Section \ref{Preliminaries}, we recall some notions, the Steiner formula and the Alexandrov-Fenchel inequality in the hyperbolic space. This section also analyzes the convexity of the parallel domains and provides an estimate for the perimeter of the inner parallel domains. In the final section, we present complete proofs of Theorems \ref{thm1} and \ref{thm4}.
	
	\section{Preliminaries}\label{Preliminaries}
	This section recalls some basic notions and properties in the hyperbolic space. For details, we refer to \cite{Kohlmann91,GWW14,WX14}. Furthermore, we analyze the convexity of parallel domains and derive an estimate for the perimeter of the inner parallel domains.
	
	Let $\mathbb{R}_1^{n+1}$ be the real vector space $\mathbb{R}^{n+1}$ equipped with the Lorentzian metric
	$$
	\ps{x}{y} =\ps{\ton{x_0,x_1,\cdots,x_n}}{\ton{y_0,y_1,\cdots,y_n}}= -x_0y_0 + x_1y_1 + \dots + x_{n}y_{n}.
	$$
	The real hyperbolic space with constant sectional curvature $-1$ is given by
	$$
	\mathbb{H}^{n} = \left\{ x \in \mathbb{R}_1^{n+1}\big| ~\ps{x}{x} = -1, x_0 \geq 1 \right\}.
	$$
	\subsection{Quermassintegrals}
	Let $K\subset\mathbb{H}^n$ be a convex domain with smooth boundary. For every $t\geq0$, define the inner parallel set $K_t$ and the outer parallel set $K^t$ of $K$ as follows
	$$K_t=\left\{x\in K:d\ton{x,\,\partial K}\geq t\right\},\qquad K^t=\left\{x\in\mathbb{H}^n:d\ton{x,K}\leq t\right\},$$
	where $d\ton{\cdot,\cdot}$ is the Riemannian distance, $d\ton{x,\,\partial K}$ is the distance from $x\in K$ to the boundary $\partial K$ and $d(x, K) = \inf\limits_{q \in K} d(x, q)$. For every $p\in\partial K$, we denote by $H_k$ the $k$-th normalized symmetric function of the principal curvatures $\kappa_1, \ldots, \kappa_{n-1}$ of $\partial K$, 
	$$
	H_k = \frac{\sum\limits_{1 \leq i_1 < \ldots < i_k \leq n - 1} \kappa_{i_1} \cdots \kappa_{i_k}}{\binom{n-1}{k}},\quad k=1,\ldots, n-1, 
	$$
	and $H_0 = 1$. In particular, $H_1$ is defined as the mean curvature of $\partial K$ at $p$. For some positive number $s>0$, the Steiner formula for the perimeter reads as
	\begin{align*}
		P\ton{K^s}=\sum_{i = 0}^{n-1}\binom{n-1}{i}\cosh^is\,\sinh^{n-1-i}s\,V_i\ton{K},
	\end{align*}
	where $V_i(K)$ are curvature integrals on $K$ defined by 
	$$
	V_{i}(K) = \int_{\partial K} H_{n-1-i}\, d\mu, \quad i = 0, \ldots, n-1.
	$$ 
	Then
	\begin{align}\label{diff}
		\frac{d}{ds}P\ton{K^s}\biggl|_{s=0}=(n-1)V_{n-2}\ton{K}.
	\end{align}
	The quermassintegrals of $K$ are defined as follows
	$$
	W_k(K) := \frac{(n - k)\omega_{k - 1} \cdots \omega_0}{n\omega_{n - 2} \cdots \omega_{n - k - 1}} \int_{\mathcal{L}_k} \chi(L_k \cap K) \,dL_k, \quad k = 1, \ldots, n - 1,
	$$
	where $\omega_k$ denotes the Hausdorff measure of
	the $k$-dimensional unit sphere, $\mathcal{L}_k$ is the space of $k$-dimensional totally geodesic subspaces $L_k$ in $\mathbb{H}^n$, $ dL_k$ is the natural (invariant) measure on $\mathcal{L}_k$ and $\chi$ is the characteristic function of $K$. In particular,
	$$
	W_0(K) = \abs{K},\quad W_1(K) = \frac{1}{n}P\ton{K},\quad W_n(K) = \frac{\omega_{n-1}}{n}.
	$$
	From Proposition 7 in \cite{Solanes06}, the quermassintegrals and the curvature integrals are related by
	$$
	V_{n - 1 - k}(K) = n\left(W_{k + 1}(K) + \frac{k}{n - k + 1}W_{k - 1}(K)\right), \quad k = 1, \ldots, n - 1, 
	$$
	and
	$$
	V_{n - 1}(K) = nW_1(K) =P\ton{K}.
	$$
	
	\subsection{Parallel hypersurfaces and Alexandrov-Fenchel inequality}
	For a domain $\Omega\subset\mathbb{H}^n$, denote by $R_\Omega$ the inradius of $\Omega$, that is,
	$$R_\Omega=\max_{x\in\Omega}d\ton{x,\,\partial\Omega}.$$

	Recall the following proposition, which may be  known to experts.
	\begin{prop}\label{h-convex}
		Let $\Omega$ be a horospherically convex bounded domain with smooth boundary $\partial \Omega$ in $\mathbb{H}^n$. Then the set $\Omega_t$, $0\leq t< R_\Omega$, is a horospherically convex domain, and $\Omega^s$ is also a horospherically convex domain for $s>0$.
	\end{prop}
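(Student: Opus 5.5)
The plan is to use the characterization of horospherical convexity as an intersection of horoballs: a closed domain $K\subset\mathbb{H}^n$ is h-convex iff for every boundary point $p$ there is a horoball $B$ with $K\subset B$ and $p\in\partial B$. Equivalently, for smooth boundaries, all principal curvatures satisfy $\kappa_i\geq 1$. Both descriptions will be used: the support-horoball one for inner parallel sets, and the curvature one, via Riccati comparison, for outer parallel sets.

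\textbf{Outer parallel sets.} Since $\partial\Omega$ is smooth with $\kappa_i\ge 1$, the outer parallel hypersurface $\partial\Omega^s$ is obtained by flowing along the outward normal geodesics. The normal exponential map is a diffeomorphism for all $s>0$, because $\Omega$ is convex and so there is no focal point outward. Along a normal geodesic, each principal curvature evolves according to the Riccati equation $\kappa'=1-\kappa^2$. Hence
\[
\kappa_i(s)=\frac{\kappa_i\cosh s+\sinh s}{\cosh s+\kappa_i\sinh s}.
\]
This quantity is $\ge 1$ iff $\kappa_i\ge1$, since $\kappa(s)-1$ has the sign of $(\kappa_i-1)e^{-s}$. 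So $\Omega^s$ has smooth boundary with $\kappa\ge1$ and is h-convex.

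A more synthetic alternative gives the same conclusion. If $\Omega\subset B$ with $B$ a horoball, then $\Omega^s\subset B^s$, and $B^s$ is again a horoball, namely the concentric one. At a boundary point $q$ of $\Omega^s$ with foot point $p$, take the support horoball $B$ at $p$; then $B^s$ supports $\Omega^s$ at $q$.

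\textbf{Inner parallel sets.} Here curvature comparison along inward normals is bad, because curvatures blow up at focal points and $\partial\Omega_t$ need not be smooth. So I will argue via intersections. I claim
\[
\Omega_t=\bigcap_{B\supset\Omega,\ B \text{ horoball}} B_t,
\]
where $B_t$ is the inner parallel set of $B$. For a horoball, $B_t$ is again a horoball: in the upper half-space model with $B=\{x_n>1\}$, we have $B_t=\{x_n>e^t\}$.

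The inclusion $\subseteq$ holds because $\Omega\subset B$ gives $d(x,\partial B)\ge d(x,\partial\Omega)$ for $x\in\Omega$. For $\supseteq$, suppose $x\in\Omega$ has $d(x,\partial\Omega)<t$. Pick $p\in\partial\Omega$ realizing the distance, and let $B$ be the support horoball at $p$. Since $d(x,\partial B)\le d(x,p)<t$, we get $x\notin B_t$.

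Points outside $\Omega$ are excluded as well, because $\Omega$ itself is the intersection of its support horoballs, and $B_t\subset B$. Thus $\Omega_t$ is an intersection of horoballs and hence h-convex. It is nonempty, with nonempty interior, for $t<R_\Omega$.

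\textbf{Main obstacle.} The key step is the equivalence between ``intersection of horoballs'' and the smooth definition $\kappa_i\ge1$ used in the paper. The direction from $\kappa_i\ge1$ to global support by horoballs requires a Hadamard–Blaschke type argument, or comparison of the boundary with the tangent horosphere, which has $\kappa\equiv1$. I expect to cite this from the literature (e.g. \cite{GWW14}) and interpret h-convexity of the possibly nonsmooth $\Omega_t$ in this generalized sense.
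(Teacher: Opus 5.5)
Your argument is correct. For the outer parallel sets it is essentially the paper's argument, but for the inner parallel sets you take a genuinely different route.

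The paper treats both cases by one explicit computation. It parametrizes the parallel hypersurface by $x_t=\cosh t\,x-\sinh t\,N$ and obtains $\kappa_i(t)=\frac{\kappa_i-\tanh t}{1-\kappa_i\tanh t}$. It then reads off $\kappa_i(t)-1=\frac{(1+\tanh t)(\kappa_i-1)}{1-\kappa_i\tanh t}\ge 0$. The outer case is the same formula with $t$ replaced by $-s$, which is exactly your Riccati solution, so on $\Omega^s$ the two proofs coincide.

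For $\Omega_t$ you instead represent $\Omega_t=\bigcap_{B\supset\Omega}B_t$ and use that inner parallels of horoballs are horoballs. The paper's computation is short and gives an explicit curvature formula. However, it only says something at points of $\partial\Omega_t$ reached before the focal distance, where $1-\kappa_i\tanh t>0$; otherwise the sign of the displayed quantity flips. For $t$ near $R_\Omega$, $\partial\Omega_t$ is in general not a smooth image of $x_t$ and develops corners along the cut locus. The paper does not address this, so it implicitly assumes smoothness of $\partial\Omega_t$. Your intersection argument handles every $t\in[0,R_\Omega)$ uniformly and needs no regularity of $\partial\Omega_t$, which is precisely the difficulty you identified.

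The price is twofold. First, you need the global characterization of $\kappa_i\ge 1$: a closed embedded hypersurface with all principal curvatures $\ge 1$ is supported at every point by a horosphere, and the smooth $\Omega$ is the intersection of its supporting horoballs. This is Currier's theorem, which is a more precise reference than the one you suggest. Second, your conclusion is h-convexity in the generalized, intersection-of-horoballs sense. That is the statement that is actually true. But when the proposition is used in Lemma \ref{diff-P}, it feeds the Steiner formula and the Alexandrov--Fenchel inequality of Theorem \ref{V_k}, both stated for smooth boundaries. So your version makes visible that an approximation of nonsmooth h-convex sets by smooth ones is needed there, a step the paper's proof bypasses.

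A minor point: in the outer case, absence of outward focal points only gives that the normal map is a local diffeomorphism. Injectivity comes from uniqueness of the nearest-point projection onto the convex set $\Omega$.
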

	\begin{proof} 
		For readers' convenience, we provide a proof of the proposition by adapting the argument of \cite{MR91}.    
		Since $\mathbb{H}^n\subset\mathbb{R}_1^{n+1}$, let $x:\partial \Omega\rightarrow\mathbb{R}_1^{n+1}$ be a map with $\ps{x}{x}=-1$ and $x_0\geq 1$. Denote by $\exp$ the exponential map of $\mathbb{H}^{n}$ and by $N$ the unit outward normal vector field to $\partial \Omega$. For the set $\Omega_t$, define
		$$
		x_t(p) = \exp_{x(p)}\ton{-tN(p)}= \cosh t\, x(p) - \sinh t~ N(p),\qquad p\in \partial \Omega.
		$$
		Let $\{\kappa_i\}_{i=1}^{n-1}$ be the principal curvatures of $\partial \Omega$ at $p$, with corresponding orthonormal principal directions $\{e_i\}_{i=1}^{n-1}$. Then,
		$$
		(x_t)_*(e_i) = \ton{\cosh t - \kappa_i \sinh t} e_i,
		$$
		and hence, $N_t = -\sinh t \, x + \cosh t\, N$ is a unit normal field for $x_t$. Moreover, the second
		fundamental form $\sigma_t$ of $x_t$ is given by 
		$$
		\sigma_t \left( (x_t)_*(e_i), (x_t)_*(e_j) \right) = \ps{ (N_t)_*(e_i)}{ (x_t)_*(e_j)}.
		$$
		By direct computation,
		$$
		\kappa_i(t) = \frac{-\sinh t + \kappa_i \cosh t}{\cosh t -\kappa_i \sinh t} = \frac{\kappa_i - \tanh t}{1 - \kappa_i \tanh t}.
		$$
		Then 
		$$\kappa_i(t)-1=\frac{\ton{1+\tanh t}\ton{\kappa_i-1}}{1-\kappa_i\tanh t}\geq 0.$$
		Hence, $\Omega_t$ is a horospherically convex domain. 
		
		For the set $\Omega^s$, $s > 0$, we can adopt a similar argument to that for $\Omega_t$ and conclude that $\Omega^s$ is a horospherically convex domain.
	\end{proof}
	For any horospherically convex bounded domain with smooth boundary in $\mathbb{H}^n$, the following hyperbolic Alexandrov-Fenchel inequality holds.
	\begin{thm}{\cite[Theorem 1.3]{WX14}}\label{V_k}
		Let $ 1 \leq k \leq n-1$. Any horospherically convex bounded domain $\Omega$ with smooth boundary in $\mathbb{H}^n$ satisfies
		$$
		\int_{\partial \Omega} H_k \, d\sigma \geq \omega_{n - 1} \left\{ \left( \frac{P\ton{\Omega}}{\omega_{n - 1}} \right)^{\frac{2}{k}} + \left( \frac{P\ton{\Omega}}{\omega_{n - 1}} \right)^{\frac{2}{k} \frac{(n - k - 1)}{n - 1}} \right\}^{\frac{k}{2}}.
		$$
		Equality holds if and only if $\Omega$ is a geodesic ball.
	\end{thm}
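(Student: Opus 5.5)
Since Theorem~\ref{V_k} is quoted from \cite{WX14}, the plan below reconstructs its proof only in outline. The approach is a curvature flow: deform $\partial\Omega$ through h-convex hypersurfaces so that the perimeter $P$ stays fixed and $\int_{\partial\Omega}H_k\,d\sigma$ does not increase, and show that the flow converges to a geodesic sphere. The inequality then reduces to a computation on balls. That computation is immediate. A geodesic sphere of radius $r$ has $P=\omega_{n-1}\sinh^{n-1}r$ and $H_k=\coth^k r$, so $\int H_k=\omega_{n-1}\sinh^{n-1-k}r\,\cosh^k r$. Writing $s=P/\omega_{n-1}=\sinh^{n-1}r$, the right-hand side of Theorem~\ref{V_k} equals $\omega_{n-1}\sinh^{n-1-k}r\,(\sinh^2r+1)^{k/2}$, which is exactly this value. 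So equality holds on balls, and the problem is entirely about monotonicity along a flow.

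For the flow I would take a normal variation $\partial_t x=f\,\nu$ with speed $f=c(t)-H_{k}/H_{k-1}$, or a similar quotient with the right homogeneity, where $c(t)$ is chosen to keep the perimeter fixed. The variation formulas in $\mathbb{H}^n$ read
\[
\frac{d}{dt}\int H_m=\int\bigl((n-1-m)H_{m+1}-mH_{m-1}\bigr)f .
\]
For $m=0$ this gives $c(t)=\int H_1H_k/H_{k-1}\big/\int H_1$. For $m=k$, I would insert this $c(t)$ and then control the sign of the resulting expression with two tools. The first is the hyperbolic Minkowski formulas $\int(\cosh r\,H_{m-1}-uH_m)=0$, where $u$ is the support function. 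The second is the Newton--Maclaurin inequalities $H_{m-1}H_{m+1}\le H_m^2$ and $H_{m}/H_{m-1}\le H_{m-1}/H_{m-2}$, which are valid because all $\kappa_i\ge1>0$. Together these should show $\frac{d}{dt}\int H_k\le 0$, with equality only when the surface is umbilic. An umbilic closed hypersurface in $\mathbb{H}^n$ is a geodesic sphere.

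The main obstacle is the analytic part. One must show that the flow preserves h-convexity ($\kappa_i\ge1$), exists for all time, and converges smoothly to a geodesic sphere. For preservation I would apply the maximum principle to the evolution of the tensor $h_{ij}-g_{ij}$, in the spirit of the parallel-surface computation in Proposition~\ref{h-convex}: the curvature $-1$ reaction terms should have a good sign exactly at the threshold $\kappa=1$. After that, fixed perimeter gives two-sided bounds on the inradius and outer radius of h-convex domains, and hence $C^0$ and $C^1$ estimates. Curvature bounds come from a maximum-principle argument on the speed, and higher regularity from Krylov--Safonov and Schauder estimates. Exponential convergence to a sphere would follow from the strict monotonicity together with pinching.

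Passing to the limit then gives $\int_{\partial\Omega}H_k\ge\int_{\partial B}H_k$, where $B$ is the ball with the same perimeter, and this is the claimed bound. If equality holds, the monotone quantity is constant along the flow. Then the equality cases of Newton--Maclaurin force $\partial\Omega$ to be umbilic, so $\Omega$ is a geodesic ball.
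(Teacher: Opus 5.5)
\textbf{There is a genuine gap, in the monotonicity step, and it starts from the variation formula.} The paper gives no proof of this statement; it only quotes \cite{WX14}.

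In $\mathbb{H}^n$ the correct formula is
\[
\frac{d}{dt}\int_{\partial\Omega} H_m\,d\mu=\int_{\partial\Omega}\bigl((n-1-m)H_{m+1}+m\,H_{m-1}\bigr)f\,d\mu .
\]
The minus sign you wrote belongs to the formula in $\mathbb{S}^n$. You can check this on geodesic spheres with $f\equiv 1$ and $m=1$: $\frac{d}{dr}\bigl(\omega_{n-1}\sinh^{n-2}r\cosh r\bigr)=\omega_{n-1}\sinh^{n-1}r\,\bigl((n-2)\coth^2 r+1\bigr)$.

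With the correct sign, the lower-order term works against you. Take $k=1$, which is the only case the paper uses (in Lemma~\ref{diff-P}). Your speed is $f=c-H_1$ with $c=\int H_1^2/\int H_1$. The lower-order term is then $\int f=\frac{d}{dt}|\Omega|$, which equals $\bigl(P(\Omega)\int H_1^2-(\int H_1)^2\bigr)/\int H_1\ge 0$, strictly positive unless $H_1$ is constant.

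Newton--Maclaurin and the Minkowski formulas cannot absorb this in general, and the case $n=2$ shows why. There the $H_2$-term is absent, and Gauss--Bonnet gives $\int_{\partial\Omega}\kappa\,ds=2\pi+|\Omega|$. The asserted inequality $\int_{\partial\Omega}\kappa\,ds\ge\sqrt{P(\Omega)^2+4\pi^2}$ is then equivalent to $|\Omega|^2+4\pi|\Omega|\ge P(\Omega)^2$. That is the reverse of the hyperbolic isoperimetric inequality. So the statement as quoted tacitly needs $n\ge 3$. Your argument never uses $n\ge3$, and with the correct formula $\int\kappa$ actually increases along your flow when $n=2$.

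Your claim that $\int H_{k+1}(c-F)$ has the right sign via Minkowski formulas is also only asserted, not derived. The rigidity step overclaims as well. For $(n,k)=(3,2)$, Gauss--Bonnet gives $\int_{\partial\Omega}H_2=4\pi+P(\Omega)$, which equals the right-hand side for every h-convex $\Omega$. The monotone quantity is therefore constant, and no umbilicity can be forced.

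\textbf{What is missing is a way to control the lower-order term.} The route behind the cited result goes through quermassintegrals, whose variation $\frac{d}{dt}W_j=\frac{n-j}{n}\int H_j f$ has no lower-order term. Along perimeter-preserving flows $\partial_t x=(\phi(t)-F)\nu$, monotonicity of $W_j$ for $j\ge 2$ follows from a Chebyshev/H\"older argument. For example, with $F=(H_j/H_1)^{1/(j-1)}$, apply Chebyshev's inequality for the measure $H_1\,d\mu$. The analytic work is then to show that such flows preserve h-convexity and converge, which gives $W_j\ge g_j\circ g_1^{-1}(W_1)$ with $g_j(r)=W_j(B_r)$.

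The identity $\int_{\partial\Omega}H_k=n\bigl(W_{k+1}+\frac{k}{n-k+1}W_{k-1}\bigr)$ from Section~\ref{Preliminaries} then gives the claim for $k\ge 2$ by adding the two bounds. Here $W_{k-1}$ is either $W_1$ itself or again controlled by the same inequality.

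For $k=1$, however, the identity reads $\int_{\partial\Omega}H_1=nW_2+|\Omega|$. The isoperimetric inequality bounds $|\Omega|$ from the wrong side, so an additional argument that genuinely uses $n\ge 3$ is required. Your outline supplies neither this nor a correct monotone quantity. The parts on preserving h-convexity and on convergence only make sense once a speed with a valid monotonicity has been fixed.
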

	By combining Proposition \ref{h-convex} with Theorem \ref{V_k}, we obtain an estimate for the derivative of the perimeter of $\Omega_t$.
	\begin{lemma}\label{diff-P}
		Let $\Omega$ be a horospherically convex bounded domain with smooth boundary in $\mathbb{H}^{n}$. Then for almost every $t\in\ton{0,\, R_\Omega}$,
		$$-\frac{d}{dt}P\ton{\Omega_t}\geq(n-1)\omega_{n - 1} \left\{ \left( \frac{P\ton{ \Omega_t}}{\omega_{n - 1}} \right)^{2} + \left( \frac{P\ton{\Omega_t}}{\omega_{n - 1}} \right)^{ \frac{2(n -2)}{n-1}} \right\}^{\frac{1}{2}}.$$
		Equality holds if $\Omega$ is a geodesic ball.
	\end{lemma}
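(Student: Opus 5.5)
The plan is to reduce the derivative of $P(\Omega_t)$ to a total mean curvature integral over $\partial\Omega_t$, and then apply Theorem \ref{V_k} with $k=1$.

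\emph{Step 1: smoothness.} I will first check that for $0\le t<R_\Omega$ with $t$ below the focal distance, $\partial\Omega_t$ is the smooth parallel hypersurface $x_t(\partial\Omega)$ from the proof of Proposition \ref{h-convex}. Since $\kappa_i\ge 1$, the denominator $1-\kappa_i\tanh t$ vanishes only at a focal point. Past the first focal time the parallel set can lose smoothness, so I will work with $t$ outside a null set. By Proposition \ref{h-convex}, $\Omega_t$ is horospherically convex, and $t\mapsto P(\Omega_t)$ is monotone, hence differentiable almost everywhere.

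\emph{Step 2: derivative of the perimeter.} Fix such a $t$ and take $h>0$ small. Using the parallel map $x_{t+h}$, the Jacobian gives
$$P(\Omega_{t+h})=\int_{\partial\Omega_t}\prod_{i=1}^{n-1}\bigl(\cosh h-\kappa_i(t)\sinh h\bigr)\,d\sigma .$$
Differentiating at $h=0$ yields
$$-\frac{d}{dt}P(\Omega_t)=\int_{\partial\Omega_t}\sum_i\kappa_i(t)\,d\sigma=(n-1)\int_{\partial\Omega_t}H_1\,d\sigma .$$
Equivalently, one can view $\Omega_t$ and its inner parallel sets through \eqref{diff} applied to $K=\Omega_{t+h}$ (so that $K^h\subseteq\Omega_t$), giving $(n-1)V_{n-2}$ with $V_{n-2}=\int H_1$.

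For nonsmooth times I will argue with inequalities. Since $(\Omega_{t+h})^h\subseteq\Omega_t$ and perimeter is monotone under inclusion of convex sets, we get $P(\Omega_t)-P(\Omega_{t+h})\ge P((\Omega_{t+h})^h)-P(\Omega_{t+h})$. The Steiner formula then gives a one-sided difference quotient bounded below by $(n-1)V_{n-2}(\Omega_{t+h})+o(1)$. Monotonicity of perimeter for h-convex (hence convex) sets is standard.

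\emph{Step 3: apply Alexandrov--Fenchel.} Applying Theorem \ref{V_k} with $k=1$ to $\Omega_t$ gives
$$\int_{\partial\Omega_t}H_1\,d\sigma\ge\omega_{n-1}\Bigl\{\Bigl(\tfrac{P(\Omega_t)}{\omega_{n-1}}\Bigr)^{2}+\Bigl(\tfrac{P(\Omega_t)}{\omega_{n-1}}\Bigr)^{\frac{2(n-2)}{n-1}}\Bigr\}^{1/2}.$$
Multiplying by $n-1$ yields the claim. For the version of Step 2 that uses $\Omega_{t+h}$, I will let $h\to0$, using continuity of $P(\Omega_s)$ at points of differentiability.

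\emph{Equality case.} For a geodesic ball of radius $R$, $\Omega_t$ is the ball of radius $R-t$, and Theorem \ref{V_k} is an equality for balls.

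\emph{Main obstacle.} The hardest step is justifying the derivative formula at almost every $t$ when $\partial\Omega_t$ is not smooth. I expect the smooth computation in Step 2 to suffice in practice, with the inclusion argument as the fallback.
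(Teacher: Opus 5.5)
Your inclusion argument is essentially the paper's proof. The paper uses $(\Omega_t)^s\subset\Omega_{t-s}$ with the left difference quotient, so \eqref{diff} and Theorem~\ref{V_k} are applied to the fixed set $\Omega_t$ and no continuity step is needed. Your mirrored version with $(\Omega_{t+h})^h\subseteq\Omega_t$ works just as well: since all $V_i\ge 0$ for h-convex sets, your difference quotient is bounded below by $(n-1)V_{n-2}(\Omega_{t+h})$ for every $h>0$, with no $o(1)$ term.

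This inclusion argument is the actual proof, not a fallback, and your claim that the nonsmooth times form a null set is wrong. The Jacobian computation is valid only while $\partial\Omega_{t+h}$ is the smooth parallel hypersurface of $\partial\Omega_t$. For a non-spherical domain (e.g.\ an ellipse-like one), the times at which $\Omega_t$ has singular boundary typically fill an interval of positive length below $R_\Omega$.
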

	\begin{proof}
		By Proposition \ref{h-convex}, the set $\Omega_t$ is a horospherically convex domain, which implies that $\ton{\Omega_t}^s$ is also a horospherically convex domain for $s>0$. Notice that 
		$$\ton{\Omega_t}^s\subset \Omega_{t-s}.$$
		The function $P\ton{\Omega_t}$ is decreasing with respect to $t$, hence it is differentiable almost everywhere. For almost every $t\in\ton{0,R_\Omega}$, due to \eqref{diff}, one can infer that
		\begin{align*}
			-\frac{d}{dt}P\ton{\Omega_t}&= \lim_{s \to 0^+} \frac{P(\Omega_{t - s}) - P(\Omega_t)}{s}\\
			&\geq \lim_{s \to 0^+} \frac{P((\Omega_t)^s) - P(\Omega_t)}{s}\\
			&=(n-1)V_{n-2}\ton{\Omega_t}.
		\end{align*}
		By the definition of $V_{n-2}\ton{\Omega_t}$ and the hyperbolic Alexandrov-Fenchel inequality in Theorem \ref{V_k} with $k=1$, the proof of Lemma \ref{diff-P} is complete.
	\end{proof}
	
	\section{Proofs of Theorem \ref{thm1} and \ref{thm4}}
	In this section, inspired by \cite{Polya60, PW61, ACCNT25}, we prove Theorems \ref{thm1} and \ref{thm4} using the method of parallel coordinates to construct a suitable test function.

	Without loss of generality, we assume that $\Omega^\star$ is the geodesic ball centered at $q\in\mathbb{H}^n$ with radius $R>0$ satisfying $P\ton{\Omega^\star}=P\ton{\Omega}$. The eigenfunction $v(x)=\psi\ton{d\ton{x,q}}>0$ associated with $\lambda_1\ton{\Omega^\star,\beta}$ is the radial solution to the following equation
	\begin{eqnarray}\label{rad-rob}
		\begin{cases}
			\psi'' + (n-1)\dfrac{\cosh r}{\sinh r} \psi' + \lambda_1\ton{\Omega^\star,\beta} \psi = 0 & r \in (0, R), \\
			\psi'(0) = 0, \\
			\psi'(R) + \beta \psi(R) = 0.
		\end{cases}
	\end{eqnarray}
	Letting $\phi\ton{\rho} = \psi\ton{R - \rho}$, we can regard $v$ as a function of the distance to the boundary of the ball $\Omega^\star$, that is,
	$$v(x) = \phi\ton{d\ton{x, \partial\Omega^\star}}.$$
	Choosing $u=1$ in \eqref{eq:ray}, it can be concluded that $\lambda_1\ton{\Omega^\star,\beta}<0$ for $\beta<0$. Moreover, by \eqref{rad-rob},
	\begin{align*}
		\dfrac{\partial}{\partial r}\ton{\sinh ^{n-1}r\,v^\prime(r)}=-\lambda_1\ton{\Omega^\star,\beta}v\ton{r}\sinh^{n-1}r>0,
	\end{align*}
	which implies that $v^\prime(r)>0$ for $r\in(0,R]$, that is, $v\ton{r}$ is strictly increasing with respect to $r$. Similarly, if $\beta>0$, then $v^\prime(r)<0$ for $r\in(0,R]$, that is, $v\ton{r}$ is strictly decreasing with respect to $r$.
	
	\begin{proof}[Proof of Theorem \ref{thm1}]
		Let $v>0$ be the eigenfunction associated with the eigenvalue  $\lambda_1\left(\Omega^\star,\beta\right)$ for \eqref{rad-rob}.
		Since $v'(r)>0$ for $r\in(0,R]$, where $R$ is the radius of $\Omega^\star$, one has $\phi'(t)<0$ for $t\in[0,R)$. Define
		$$u(x)=\phi\ton{d\ton{x,\,\partial\Omega}},\qquad x\in\Omega.$$
		Then
		\begin{align}\label{eq:min}
			v_m=\min\limits_{x\in\Omega^\star}v(x)=\phi\ton{R}\leq\phi\ton{R_\Omega}=\min\limits_{x\in\Omega}u(x)=u_m.
		\end{align}		 
		By Lemma \ref{diff-P}, we have
		\begin{align*}
			\frac{d}{dt}P\ton{\Omega_t}\leq-\ton{n-1}\omega_{n - 1} \left\{ \left( \frac{P\ton{ \Omega_t}}{\omega_{n - 1}} \right)^{2} + \left( \frac{P\ton{\Omega_t}}{\omega_{n - 1}} \right)^{ \frac{2(n -2)}{n-1}} \right\}^{\frac{1}{2}}
		\end{align*}
		and 
		\begin{align*}
			\frac{d}{dt}P\ton{\Omega^\star_t}=-\ton{n-1}\omega_{n - 1} \left\{ \left( \frac{P\ton{\Omega^\star_t}}{\omega_{n - 1}} \right)^{2} + \left( \frac{P\ton{\Omega^\star_t}}{\omega_{n - 1}} \right)^{ \frac{2(n -2)}{n-1}} \right\}^{\frac{1}{2}}.
		\end{align*}
		From Lemma 3.3 in \cite{ACCNT25}, it follows that
		\begin{align}\label{ineq:P}
			P\ton{\Omega_t}\leq P\ton{\Omega^\star_t},\qquad t\in\ton{0,R_\Omega}.
		\end{align}		 
		Combining \eqref{eq:min}, \eqref{ineq:P} with the co-area formula yields
		\begin{equation*}
			\begin{split}
				\int_{\Omega} u^2 \, d\mu &= \int_{0}^{R_{\Omega}} \phi^2(t) P\ton{\Omega_t} \, dt \\
				&= \int_{0}^{R} \phi^2(t) P\ton{\Omega^\star_t} \, dt -\int_{0}^{R} \phi^2(t) \ton{P\ton{\Omega^\star_t}-P\ton{\Omega_t}} \, dt \\
				&\leq \int_{\Omega^\star} v^2 \, d\mu -v_m^2\ton{\abs{\Omega^\star}-\abs{\Omega}},
			\end{split}
		\end{equation*}		
		and
		\begin{eqnarray*}
			\int_{\Omega} |\nabla u|^2 \, d\mu = \int_{0}^{R_{\Omega}} (\phi'(t))^2 P(\Omega_t) \, dt \leq \int_{0}^{R} (\phi'(t))^2 P(\Omega^\star_t) \, dt=\int_{\Omega^\star} |\nabla v|^2 \, d\mu.
		\end{eqnarray*}
		A direct computation shows that
		\begin{eqnarray*}
			\int_{\partial\Omega}u^2\,d\sigma=\phi^2(0)P\ton{\Omega}=\int_{\partial\Omega^\star}v^2\,d\sigma.
		\end{eqnarray*}
		Hence, we have
		\begin{align*}
			\lambda_1\ton{\Omega,\beta}&\leq \dfrac{\displaystyle\int_{\Omega} |\nabla u|^2\, d\mu + \beta \int_{\partial \Omega} u^2\, d\sigma}{\displaystyle\int_{\Omega} u^2\, d\mu}\\
			&\leq\dfrac{\displaystyle\int_{\Omega^\star} |\nabla v|^2\, d\mu + \beta \int_{\partial \Omega^\star} v^2\, d\sigma}{\displaystyle\int_{\Omega^\star} v^2 \, d\mu -v_m^2\ton{\abs{\Omega^\star}-\abs{\Omega}}}\\
			&=\dfrac{\displaystyle\lambda_1\ton{\Omega^\star,\beta}}{\displaystyle 1-\frac{\displaystyle v^2_m\ton{\abs{\Omega^\star}-\abs{\Omega}}}{\displaystyle\|v\|_{L^2\ton{\Omega^\star}}^2}}.
		\end{align*}		
		This completes the proof of Theorem \ref{thm1}.
	\end{proof}

	\begin{proof}[Proof of Theorem \ref{thm4}]
		For $\beta>0$, let $v>0$ be the eigenfunction associated with $\lambda_1\left(\Omega^\star,\beta\right)$ for the problem \eqref{rad-rob}. Then $\phi'(t)>0$ for $t\in[0,R)$. 
		Define
		$$u(x)=\phi\ton{d\ton{x,\,\partial\Omega}}.$$
		Then
		\begin{align}\label{eq:v}
			u_M=\max\limits_{x\in\Omega}u(x)=\phi\ton{R_\Omega}\leq \phi\ton{R}=\max\limits_{x\in\Omega^\star}v(x)=v_M.
		\end{align}
		The definitions of $u$ and $v$ yield
		\begin{align}\label{eq:Re2}
			\int_{\partial\Omega} u^2\, d\sigma=\phi^2(0)P\ton{\Omega}=\phi^2(0)P\ton{\Omega^\star}=\int_{{\partial\Omega}^\star}v^2 \,d\sigma.
		\end{align}
		From \eqref{ineq:P}, \eqref{eq:v} and the co-area formula, it follows that
		\begin{equation}\label{eq:Re1}
			\begin{split}
				\int_\Omega\abs{\nabla u}^2\,d\mu &=\int_0^{R_\Omega}\ton{\phi'(t)}^2P\ton{\Omega_t}\,dt\\
				&\leq\int_0^{R}\ton{\phi'(t)}^2P\ton{\Omega^\star_t}\,dt\\
				&=\int_{\Omega^\star}\abs{\nabla v}^2\,d\mu,
			\end{split}
		\end{equation}
		and 
		\begin{equation}\label{eq:Re3}
			\begin{split}
				\int_\Omega u^2\,d\mu=&\int_0^{R_\Omega}\phi^2(t)P\ton{\Omega_t}\,dt\\
				=&\int_0^{R}\phi^2(t)P\ton{\Omega^\star_t}\,dt-\int_0^{R}\phi^2(t)\ton{P\ton{\Omega^\star_t}-P\ton{\Omega_t}}\,dt\\
				\geq&\int_{\Omega^\star} v^2\,d\mu-v_M^2\ton{\abs{\Omega^\star}-\abs{\Omega}}.
			\end{split}
		\end{equation}	
		If the inequality
		\[
		\int_{\Omega^\star} v^2\,d\mu - v_M^2 \left(|\Omega^\star| - |\Omega|\right) \leq
		0
		\]
		holds, it follows that
		\[
		\frac{\displaystyle\lambda_1(\Omega,\beta) - \lambda_1(\Omega^\star,\beta)}{\displaystyle\lambda_1(\Omega,\beta)} \leq 1 \leq \frac{\displaystyle v_M^2 \left(|\Omega^\star| - |\Omega|\right)}{\displaystyle \|v\|_{L^2(\Omega^\star
				)}^2}.
		\]
		Otherwise, if
		\[
		\int_{\Omega^\star} v^2\,d\mu - v_M^2 \left(|\Omega^\star| - |\Omega|\right
		) > 0,
		\]
		then combining 
		\eqref{eq:Re2}, \eqref{eq:Re1} and \eqref{eq:Re3}
		yields
		\begin{align*}
			\lambda_1\ton{\Omega,\beta
			}
			&\leq \frac{\displaystyle\int_{\Omega} |\nabla u|^2\, d\mu + \beta \int_{\partial \Omega} u^2\, d\sigma}{\displaystyle\int_{\Omega} u^2\, d\mu} \\
			&\leq \frac{\displaystyle\int_{\Omega^\star} |\nabla v|^2\, d\mu + \beta \int_{\partial \Omega^\star} v^2\, d\sigma}{\displaystyle\int_{\Omega^\star} v^2 \, d\mu - v_M^2 \left(|\Omega^\star| - |\Omega|\right)} \\
			&= 
			\frac{\lambda_1(\Omega^\star,\beta)}{\displaystyle 1 - \frac{v_M^2 \left(|\Omega^\star| - |\Omega|\right)}{\|v\|_{L^2(\Omega^\star
						)}^2}}.
		\end{align*}
		This completes the proof of Theorem~
		\ref{thm4}.
	\end{proof}
	
	\subsection*{Acknowledgements} The authors thank Professor Yong Wei for his helpful discussions.



\vspace{1cm}

\begin{flushleft}
	Daguang Chen,
	E-mail: dgchen@tsinghua.edu.cn\\
	Shan Li,
	E-mail: lishan22@mails.tsinghua.edu.cn\\
	
	Department of Mathematical Sciences, Tsinghua University, Beijing, 100084, P.R. China 	
	
\end{flushleft}

\end{document}